\theoremstyle{break}
\newtheorem{lemma}{Lemma}[section]
\newtheorem{proposition}[lemma]{Proposition}
\newtheorem{theorem}[lemma]{Theorem}
\newcommand \QQ {{\mathbb Q}}
\newcommand \PP {{\mathbb P}}
\newcommand \FF {{\mathbb F}}
\newcommand \AAA {{\mathbb A}}
\newcommand \RR {{\mathbb R}}
\newcommand \ZZ {{\mathbb Z}}
\newcommand \LL {{\mathbb L}}
\newcommand \cI {{\mathcal I}}
\newcommand \cM {{\mathcal M}}
\newcommand \cV {{\mathcal V}}
\newcommand{\scs}{\scriptstyle}
\newcommand{\s}[1]{\ensuremath{\scs{#1}}}
\newcommand{\so}{\ensuremath{\scs{0}}}
\newcommand{\sk}{$\scs{\!\!-1}$}
\newcommand{\sj}{$\scs{1}$}
\begin{document}
\begin{center}
    \LARGE
    {\bf
    On one example and one\\
    counterexample in counting rational\\
    points on graph hypersurfaces.
    \vspace{2ex}\\
    }
    \large\sc{Dzmitry Doryn}
  \end{center}
\medskip
\begin{abstract}
\noindent In this paper we present a concrete counterexample to the
conjecture of Kontsevich about the polynomial countability of graph
hypersurfaces. In contrast to this, we show that the "wheel with
spokes" graphs $WS_n$ are polynomially countable.
\end{abstract}

\section*{Introduction} \addcontentsline{toc}{chapter}{Introduction}

Let $\Gamma$ be a connected graph with a set of edges $E$ and
vertexes $V$. We can define the graph polynomial of $G$ by
\begin{equation}
    \Psi_{\Gamma}:=\sum_T \prod_{e\notin T} A_e \in \ZZ[A_e|e\in E],
\end{equation}
where the sum goes over all spaning trees and $A_e$'s are variables.
The vanishing locus of such a polynomial in the affine space
$\AAA^{|E|}_\ZZ$ (or in $\PP^{|E|}_\ZZ$) defines the graph
hypersurface $X_{\Gamma}$. It is convenient for us to use this
affine notation since the big part of computations was done with PC.

We are interested in the number of $\FF_q$-rational points of graph
hypersurface. Consider a function $F_{\Gamma}:q\mapsto
\#X_{\Gamma}(\FF_q)$ defined on the set of prime powers. We known
that all know periods of Feynman graphs are elements of a
$\QQ$-subalgebra of $\RR$ generated by multiple zeta values (see
\cite{BrK} and \cite{Sch}). There was a hope that the arithmetic of
graph hypersurfaces is very simple. In '97 M.Kontsevich made the
following conjecture on the number of points of graph hypersurfaces:
\emph{For any graph} $\Gamma$, $F_{\Gamma}\in\ZZ[q]$. The conjecture
was wrong. That was proved in \cite{BB} in '00.  The proof is very
technical and gives no concrete example of a graph for which the
function $F_{X_{\Gamma}}$ is not a polynomial.

In this paper we give a concrete counterexample with a strong proof.
There are parallel computations done by Oliver Schnetz
independently, where he proposed 6 counterexamples in $\phi^4$
theory with 14 edges. Focusing on finding of all counterexamples in
$\phi^4$ theory, he computes $F_{\Gamma}(q)$ for $q$ up to 7 and
finds the graphs, for which the coefficients after (partial)
interpolation in these points become very large; he speculates on
the good behaviour of values $F_{\Gamma}(q)$ at prime powers $q=2^k$
or away from this set (5 examples), and with respect to the residue
modulo 3 (one other example).

Actually, if you are sure about a graph being a counterexample, you
can use cluster computer system to proof this. My aim is to show how
it was done for our example with a home computer using the
stratification of graph hypersurface studied in \cite{BEK} and
\cite{DD}. My graph was chosen by some thoughts of symmetry and
after some tries, and one of the Oliver's graphs is isomorphic to my
counterexample.

You can find the implementation of the algorithm on\\
$\mathstrut\quad\quad\quad\underline{http://doryn.org/progs/conterexample.html}$.

The consequence of the result of (\cite{BB}) is that the
$F_{\Gamma}$ is not a polynomial for almost all graphs. On the other
hand, Spencer Bloch proves in \cite{Bl} that the (finite) sum of all
$F_{\Gamma}$ for connected graphs with $n$ edges (counted with some
multiplicities) is a polynomial for any $n\geq 3$. Can the big
enough graph be polynomially countable? The example is the cycle
$O_n$ of length $n$, since $F_{O_n}(q)=q^{n-1}$. We avoid such
trivial cases. Recall that primitively log divergent graphs are the
graphs $\Gamma$ such that $|E(\Gamma)|=2h_1(\Gamma)$ and for all
subgraphs $\Gamma'\subset\Gamma$ the inequality
$E(\Gamma')>2h_1(\Gamma')$ holds. We restrict our attention to the
primitively log divergent graphs since for such graphs the periods
are defined. So, the natural question is whether we have a
primitively log divergent graph $\Gamma_n$ with the Betti number
$h_1(\Gamma)=n$ such that $F_{X}$ is a polynomial for each $n\geq
3$. The answer is yes. The function $F_{\Gamma}$ is a polynomial for
each graph from the $WS_n$ series. In the last part of the paper we
prove this statement by computing the class of the graph
hypersurface of $WS_n$ in the Grothendick ring of varieties.

\section{Counterexample}
We start with presenting our graph $\Gamma=XStrip$. It looks like a
strip of 3 squares and with $X's$ inside both pairs of consequent
squares (see drawing). For a graph polynomial we use the
presentation as a determinant of a matrix. We choose some
orientation of edges and the direction of loop tracing, and we build
an $h_1(\Gamma)\times N$ -- table $Tab(\Gamma)$ with
$N=|E(\Gamma)|$. The $Tab(\Gamma)_{ij}=1$ if the edge $e_j$ in the
$i$'s loop is in the tracing direction of the loop and
$Tab(\Gamma)_{ij}=-1$ if this edge is in the opposite direction;
otherwise $Tab(\Gamma)_{i j}=0$. Then the desired matrix is
$\cM_{\Gamma}(T):=\sum_{k=1}^N T_k \cM^k$ in some variables
$T_0,\ldots,T_N$, where $\cM^k_{i j}=Tab(\Gamma)_{i k}\cdot
Tab(\Gamma)_{j k}$. For more details see (\cite{DD}, Example 1.2.5).
\bigskip\\
\begin{picture}(0,0)(0,40)
\put(153,10){\vector(-1,0){51}} \put(0,10){\vector(0,1){68}}
\put(102,10){\vector(-1,0){51}} \put(51,10){\vector(-1,0){51}}
\put(0,78){\vector(1,0){51}} 
\put(51,78){\vector(1,0){51}} \put(102,78){\vector(1,0){51}}
\put(51,78){\vector(0,-1){68}}
\put(153,78){\vector(0,-1){68}} \put(102,78){\vector(0,-1){68}}
\put(0,10){\circle*{2}} \put(0,78){\circle*{2}}
\put(51,10){\circle*{2}} \put(51,78){\circle*{2}}
\put(102,10){\circle*{2}} \put(102,78){\circle*{2}}
\put(153,10){\circle*{2}} \put(153,78){\circle*{2}}

\qbezier(0,78)(51,48)(102,10) \qbezier(0,10)(51,41)(102,78)
\qbezier(51,78)(102,48)(152,10) \qbezier(51,10)(102,41)(153,78)

\put(0,78){\vector(-3,2){0}} \put(102,78){\vector(3,2){0}}
\put(153,10){\vector(3,-2){0}} \put(153,78){\vector(3,2){0}}

\put(4,45){$\s{e_{11}}$} 
\put(25,81){$\s{e_1}$} \put(76,81){$\s{e_2}$}
\put(127,81){$\s{e_3}$} \put(32,59){$\s{e_4}$} \put(20,3){$\s{e_8}$}
\put(71,3){$\s{e_9}$} \put(122,3){$\s{e_{10}}$}

\put(141,35){$\s{e_{14}}$} \put(105,25){$\s{e_{13}}$}
\put(53,25){$\s{e_{12}}$} \put(86,62){$\s{e_5}$}
\put(132,58){$\s{e_6}$} \put(127,18){$\s{e_7}$}

\end{picture}
\mathstrut\qquad\qquad\qquad\qquad\qquad\qquad\qquad
\begin{tabular}{p{0.1cm}|p{0.1cm}p{0.1cm}p{0.1cm}p{0.1cm}p{0.1cm}p{0.1cm}p{0.1cm}
p{0.1cm}p{0.1cm}p{0.1cm}p{0.1cm}p{0.1cm}p{0.1cm}p{0.1cm}|}
      &\s{1} &\s{2} &\s{3} &\s{4} &\s{5} &\s{6} &\s{7} &\s{8} &\s{9}
      &\s{\!10}&\s{\!11}&\s{\!12}&\s{\!13}&\s{\!14}\\\hline
 \s{1}&\sj  &\so  &\so  &\so  &\so  &\so  &\so  &\sj  &\so  &\so  &\sj  &\sj  &\so &\so\\
 \s{2}&\so  &\so  &\so  &\sj  &\so  &\so  &\so  &\sj  &\sj  &\so  &\sj &\so  &\so &\so\\
 \s{3}&\so  &\so  &\so  &\so  &\sj  &\so  &\so  &\sj  &\sj  &\so  &\so &\so  &\sj &\so\\
 \s{4}&\so  &\sj  &\so  &\so  &\so  &\so  &\so  &\so  &\sj  &\so  &\so &\sk  &\sj &\so\\
 \s{5}&\so  &\so  &\sj  &\so  &\so  &\so  &\so  &\so  &\so  &\sj  &\so &\so  &\sk &\sj\\
 \s{6}&\so  &\so  &\so  &\so  &\so  &\sj  &\so  &\so  &\sj  &\sj  &\so &\so  &\so &\sj\\
 \s{7}&\so  &\so  &\so  &\so  &\so  &\so  &\sj  &\so  &\sj  &\sj  &\so &\sk  &\so &\so\\
\end{tabular}
To have a better chances in fighting with polynomials, we change the
variables to have as much independent entries of the matrix as
possible. Then we come to (denoting by the same letter) matrix $\cM$
in variables $A_0,\ldots,A_6$, $B_0,\ldots,B_6$.
\smallskip\\
\begin{equation}
\cM_{\Gamma}(A,B)=
 \left(
  \begin{array}{cccccccc}
     \s{B_0} & \s{A_0} & \s{A_2}    & \s{A_1} &\so  &\so & \s{A_1} \\
     \s{A_0} & \s{B_1} & \s{A_2+A_3}   &\s{A_3}   & \so  &\s{A_3} &\s{A_3} \\
     \s{A_2} & \s{A_2+A_3} &  \s{B_2}  &\s{A_3-A_4}   & \s{A_4} & \s{A_3} &\s{A_3} \\
     \s{A_1} & \s{A_3} & \s{A_3-A_4} &\s{B_3} & \s{A_4}  &\s{A_3} &\s{A_3-A_1}\\
     \so     & \so    & \s{A_4}  &\s{A_4}  &\s{B_4} & \s{A_5} & \s{A_6}\\
     \so     &  \s{A_3} & \s{A_3} & \s{A_3} & \s{A_5}  & \s{B_5} & \s{A_3+A_6}\\
     \s{A_1} & \s{A_3}  & \s{A_3}  & \s{A_3-A_1} & \s{A_6} & \s{A_3+A_6} & \s{B_6}\\
  \end{array}
\right) \label{mat5}
\end{equation}
In this section we use the affine notion of graph hypersurface. So,
$X:=X_{\Gamma}\subset\AAA^{14}(A,B)$ defined by $\det(\cM)=0$ in the
affine space with coordinates all of $A$'s and $B$'s, where
$\cM:=\cM_{\Gamma}(A,B)$. We write $X=\cV(\det(\cM))$ in this
situation. More generally, we denote by $\cV(\cI)$ or
$\cV(f_1,\ldots,f_n)$ the variety in $A^N(T_1,\ldots,T_N)$ defined
by the vanishing locus of the ideal generated by polynomials
$f_1,\ldots,f_n\in\ZZ[T_1,\ldots,T_N]$. Sometimes we write
$\cV(\cI)^{(N)}$ indicating the dimension $N$ of the ambient affine
space.

Consider the function $F_{\Gamma}:q\mapsto\#X_{\Gamma}(\FF_q)$. The
core of this article is the following
\begin{theorem}
    If $\Gamma=XStrip$, then $F_{\Gamma}$ is not a polynomial.
\end{theorem}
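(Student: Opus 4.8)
The plan is to compute $F_\Gamma(q) = \#X(\mathbb{F}_q)$ by analyzing $\det(\cM_\Gamma(A,B))$ as a polynomial in the $14$ variables $A_0,\dots,A_6,B_0,\dots,B_6$, and then to exhibit a finite list of values $q$ (prime powers) at which the resulting counts are incompatible with any single polynomial in $\mathbb{Z}[q]$. Concretely, if $F_\Gamma$ were a polynomial of degree at most $13$, its values at $14$ distinct prime powers would determine it, and we would derive a contradiction by evaluating $F_\Gamma$ at a fifteenth point — or, more efficiently, detect nonpolynomial behavior (in the spirit of Schnetz's observations) by seeing that $F_\Gamma(q)$ depends on $q$ through something like the number of points of a fixed non-mixed-Tate variety, e.g.\ a quasi-elliptic or elliptic curve, so that $F_\Gamma(q)$ is a quasi-polynomial-plus-error-term rather than a genuine polynomial.

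The key computational steps, carried out using the stratification of graph hypersurfaces from \cite{BEK} and \cite{DD}, would be: (i) view $\det(\cM)$ as linear in the "generic" variables $B_0,\dots,B_6$ (each $B_i$ appears only on the diagonal), expand along these to write $\det(\cM) = \sum_S \left(\prod_{i\in S} B_i\right)\det(\cM^{(\bar S)})$ where $\cM^{(\bar S)}$ is the principal submatrix on the complementary indices with all $B_i$ deleted; (ii) organize the point count by stratifying $\AAA^{14}$ according to which of the minors $\det(\cM^{(\bar S)})$ vanish, using inclusion–exclusion so that each stratum contributes a count controlled by lower-dimensional varieties defined purely in the $A$-variables; (iii) iterate, peeling off linear variables and reducing to the point count of a small "core" variety $\cV(f_1,\dots,f_k)$ in few $A$-variables; (iv) recognize that this core variety is (up to mixed-Tate pieces that contribute polynomially) governed by the $\mathbb{F}_q$-point count of a curve or surface whose zeta function is provably not of mixed Tate type, so its point count is not a polynomial in $q$.

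Since the excerpt says the bulk of the computation was done by computer and the implementation is posted online, the written proof would present the reduction symbolically (the sequence of determinant expansions and inclusion–exclusion identities, which are mechanical but lengthy), then state the resulting formula $F_\Gamma(q) = P(q) + c_q$ where $P\in\ZZ[q]$ and $c_q$ is (a multiple of) the point-count defect of the distinguished curve $C$, and finally verify by direct evaluation at enough small prime powers $q$ that $c_q$ is not constant — for instance checking that the values at $q=2,3,4,5,7,8,9,\dots$ cannot be fit by degree-$\le 13$ interpolation, or that the $13$-th finite difference is nonzero. The conclusion $F_\Gamma\notin\ZZ[q]$ is then immediate.

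The main obstacle I expect is step (iii)–(iv): controlling the combinatorial explosion in the stratification so that the "core" variety is small enough to analyze by hand (or by a home-computer Gröbner/point-count computation), and then \emph{identifying} the non-mixed-Tate factor and proving rigorously that its point count is not polynomial — a naive finite check of values only rules out low-degree polynomials, so to get a fully rigorous statement one must either bound the degree of a hypothetical $F_\Gamma$ a priori (via the dimension and degree of $X$, giving degree $\le 13$) and then contradict that bound with explicitly computed values, or exhibit the offending curve $C$ intrinsically and invoke that graph hypersurface counts inherit $C$'s non-mixed-Tate behavior. Managing the bookkeeping of the many strata, and keeping the intermediate polynomials from blowing up, is where the real work lies.
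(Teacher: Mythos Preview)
Your overall strategy---an a priori degree bound on $F_\Gamma$, a stratification of $X$ to make point-counting tractable, and then a numerical interpolation check at enough small prime powers---is exactly what the paper does. But the execution differs in two places that matter.

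First, your stratification plan (expand $\det\cM$ over all subsets $S$ of the diagonal variables and do inclusion--exclusion on the $2^7$ minors) is not the paper's. The paper peels variables \emph{sequentially}: write $I_7=B_0I_6-G_6$, split on $I_6=0$ or not, then invoke the identities from \cite{DD} (that $G_6I_5$ is a perfect square in the first-row variables on $\cV(I_6)\setminus\cV(I_5)$, and that $G_6$ loses its $A_0$-dependence on $\cV(I_5,I_6)$) to reach the clean formula
\[
[X]=\LL^{13}-\LL^2[\cV(I_5,G_5)]+\LL^2[\cV(I_5,I_6,\tilde G_6)].
\]
This is not just tidier bookkeeping: the visible $\LL^2$ factor gives Proposition~\ref{divis1}, namely $q^2\mid F_\Gamma(q)$, and together with irreducibility (leading term $q^{13}$) forces $F_\Gamma(q)=q^{13}+q^2\tilde F(q)$ with $\deg\tilde F\le 10$. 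So only $11$ values are needed to interpolate and a $12$th to contradict, rather than your $14+1$. Since the paper reports that even $F(23)$ took two weeks after all optimizations, this reduction is what makes the computation feasible; without it your plan would require prime powers well beyond what is reachable.

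Second, your step (iv)---isolate a non-mixed-Tate ``core'' curve $C$ and write $F_\Gamma(q)=P(q)+c_q$---is not carried out in the paper at all. The paper never identifies such a $C$; the argument is purely numerical. It computes $F(q)$ for $q=2,3,\ldots,23$ (after further reducing the ambient dimension via Steps~3--4), interpolates $\tilde F$ from the first $11$ values, and observes that the value at $q=19$ does not lie on the interpolant. That is the entire contradiction. So your worry about rigorously proving non-mixed-Tateness of $C$ is moot here: the a priori degree bound plus one extra data point suffices.
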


Assume that $F_{\Gamma}(q)$ is a polynomial. The proof is based on
the computer program, but there are several steps of optimization
needed to get the answer in a reasonable time. We use a shape of the
matrix and make a stratification of graph hypersurface.\bigskip\\
\textbf{\emph{Step 1.}} First we explain the simple projection
techniques used (\cite{BEK}) and (\cite{DD}). The polynomial
$I_7=M:=\det(\cM)$ is linear in $B_0$: $I_7=B_0I_{6}-G_{6}$ with
$G_{6}:=-I_7|_{B_0=0}$. If $I_{6}=0$, then the equation $I_7=0$
implies $G_{6}=0$ and we forget the variable $B_0$. So, the good
idea is to consider the image of $X\cap I_{6}$ under the "forgetting
$B_0$" projection from $\AAA^{14}$ to the affine space of one less
dimension $\AAA^{13}(A,B;$ no $B_0)$. In the other case
--- when $I_{n-1}\neq 0$ --- we can express $B_0$ from the equation
$I_n=0$ and also project to $A^{13}$ getting an isomorphism
$X\backslash X\cap\cV(I_6)\cong \AAA^{13}\backslash\cV(I_6)^{(13)}$.

Consider the the class $[X]$ of the graph hypersurface in the
Grothendick ring $K_0(Var_{K})$ of varieties over a field $K$ of
characteristic 0. By definition,
\begin{equation}
    [X]=[\cV(I_7)]=[X\cap\cV(I_6)]+[X\backslash X\cap\cV(I_6)].
    \label{b2}
\end{equation}
Using the explained projections, one gets
\begin{equation}
    [X\cap\cV(I_6)]=[\cV(I_6,I_7)^{(14)}]= [ \LL ] [\cV(I_6,G_6)]
    \label{b4}
\end{equation}
with $\cV(I_6,G_6)$ living in $\AAA^{13}(A$,$B$, no $B_0$) and
$\LL=[\AAA^1]$. Also
\begin{equation}
    [X\backslash
    X\cap\cV(I_6)]=[\AAA^{13}\backslash\cV(I_6)^{(13)}]=[\LL][\AAA^{12}\backslash\cV(I_6)]=
    \LL^{13}-\LL[\cV(I_6)]
    \label{b5}
\end{equation}
with $\cV(I_6)\subset\AAA^{12}(A,B;$ no $B_0$,$A_0)$ since $I_6$ is
independent of $A_0$. The polynomials $I_5$ and $G_5$ are
independent of $B_1$, while $I_5$ is also independent of $A_2$.
Thus, repeating the procedure for $\cV(I_6)$, we obtain
\begin{multline}
[\cV(I_6)]=[\cV(I_6,I_5)]+[\cV(I_6)\backslash\cV(I_6,I_5)]=
[\cV(I_5,G_5)^{(12)}]+\\
[\AAA^{11}\backslash\cV(I_5)^{(11)}]=\LL[\cV(I_5,G_5)]+\LL[\AAA^{10}\backslash\cV(I_5)],
\end{multline}
where $\cV(I_5,G_5)\subset\AAA^{11}$($A$,$B$, no $B_0$,$B_1$,$A_0$)
and $\AAA^{10}\backslash\cV(I_5)\subset \AAA^{10}$($A$,$B$ no
$B_0$,$B_1$,$A_0$,$A_2$).

 Now we consider $G_6$ as a quadratic polynomial of variables
$A_0$, $A_1$ and $A_2$ (sitting in the first row and column). The
coefficient of $A_0$ is then $I_5$. Now we use two tricks from the
paper \cite{DD}. First, by Corollary 1.5, the product polynomial
$G_6I_5$ is a square of a linear polynomial in $A_0$, $A_1$, $A_2$
on the locus where $I_6=0$ and $I_5\neq 0$. Using this one can
express $A_0$ from that linear polynomial on
$\cV(I_6)\backslash\cV(I_6,I_5)$ and get rid of $G_6$. Second is
Theorem 1.6: if $I_5=0$, then $G_6$ has not just the zero
coefficient of $A_0^2$, but is independent of $A_0$ on $\cV(I_6)$ at
all. It follows that
\begin{multline}
    [\cV(I_6,G_6)]=[\cV(I_5,I_6,G_6)]+[\cV(I_6,G_6)\backslash\cV(I_5,I_6,G_6)]=\\
    \LL[\cV(I_5,I_6,\tilde{G}_6)]+[\AAA^{11}\backslash\cV(I_5)^{(11)}]=
    \LL[\cV(I_5,I_6,\tilde{G}_6)]+\LL[\AAA^{10}\backslash\cV(I_5)],
\end{multline}
where $\tilde{G}_6:=G_6|_{A_0=0}$. Collecting everything together,
we obtain
\begin{multline}
    [X]=\LL[\cV(I_6,G_6)]+ \LL^{13}-\LL[\cV(I_6)] =
    \LL\left(\LL[\cV(I_5,I_6,\tilde{G}_6)]+
    \LL[\AAA^{10}\backslash\cV(I_5)]\right)\\
    +\LL^{13}-\LL\left(\LL[\cV(I_5,G_5)]+\LL[\AAA^{10}\backslash\cV(I_5)]\right)=\\
    \LL^{13}-\LL^2[\cV(I_5,G_5)] +\LL^2[\cV(I_5,I_6,\tilde{G}_6)].
    \label{b7}
\end{multline}
\textbf{\emph{Step 2.}} The formula above is interesting as itself
(holds in general for all primitively divergent graph) and we will
use similar technique to optimize the algorithm of computation
further, but the one direct consequence is the following.
\begin{proposition}\label{divis1}
    The polynomial $F_\Gamma(q)$ is divisible by $q^2$.
\end{proposition}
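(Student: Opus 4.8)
The plan is to obtain the divisibility as a direct corollary of the identity (\ref{b7}) produced in Step 1, by replacing the class $[-]$ in the Grothendieck ring by the point count $\#(-)(\FF_q)$.

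First I would observe that, although (\ref{b7}) is an identity in $K_0(Var_K)$ for a field $K$ of characteristic $0$, each variety occurring in it is cut out inside an explicit affine space by polynomials with integer coefficients — the entries of $\cM$ are integral, hence so are $I_5,I_6,G_5,G_6$ and $\tilde{G}_6=G_6|_{A_0=0}$ — and every operation used in the derivation is defined over $\ZZ$. Indeed, the ``forgetting $B_0$'' projection, the trivialization of the $\AAA^1$-bundle obtained by solving $I_7=B_0I_6-G_6=0$ for $B_0$ on $\{I_6\neq 0\}$, the analogous trivialization solving a linear polynomial for $A_0$ on $\cV(I_6)\setminus\cV(I_6,I_5)$ (which only requires inverting a function that is a unit there, hence is legitimate over $\FF_q$ as well), and the two algebraic inputs from \cite{DD} (Corollary 1.5 and Theorem 1.6), being polynomial identities, all survive reduction modulo every prime. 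Hence the same cut-and-paste, carried out with $\#(-)(\FF_q)$ in place of $[-]$ and with $\#\AAA^1(\FF_q)=q$ and the scissor relation $\#Y(\FF_q)=\#(Y\cap Z)(\FF_q)+\#(Y\setminus Z)(\FF_q)$, yields for every prime power $q$
\[
F_\Gamma(q)=q^{13}-q^2\,\#\cV(I_5,G_5)(\FF_q)+q^2\,\#\cV(I_5,I_6,\tilde{G}_6)(\FF_q).
\]

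From here the statement is immediate. The right-hand side equals $q^2\big(q^{11}-\#\cV(I_5,G_5)(\FF_q)+\#\cV(I_5,I_6,\tilde{G}_6)(\FF_q)\big)$, and the parenthesized quantity is an integer, so $q^2\mid F_\Gamma(q)$ for every prime power $q$. Under the running assumption that $F_\Gamma$ equals a polynomial $P$, this forces $P$ to vanish to order at least $2$ at $q=0$: after clearing denominators, $q\mid P(q)$ for all primes $q$ makes the constant term of $P$ divisible by every prime, hence zero; applying the same reasoning to $P(q)/q$ makes the next coefficient vanish; so $q^2\mid P(q)$ as polynomials.

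There is no genuine obstacle here — the work was done in Step 1 — and the only point that deserves a word is the passage from the characteristic-$0$ identity in $K_0(Var_K)$ to the identity of $\FF_q$-point counts. This is the usual spreading-out remark, and it is painless in our case because all schemes and morphisms in sight are given by explicit integral data; alternatively, one may simply carry out Step 1 with $\#(-)(\FF_q)$ from the start, in which case Proposition \ref{divis1} is nothing but the displayed factorization.
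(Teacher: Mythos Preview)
Your proof is correct and follows exactly the paper's approach: the paper's one-line argument (``the functor of counting rational points factors through the Grothendieck ring'') is precisely what you spell out, namely that the decomposition (\ref{b7}) from Step~1 yields $F_\Gamma(q)=q^{13}-q^2\,\#\cV(I_5,G_5)(\FF_q)+q^2\,\#\cV(I_5,I_6,\tilde G_6)(\FF_q)$ over every $\FF_q$. Your extra care about the passage from characteristic~$0$ to $\FF_q$ and about why divisibility at all prime powers forces divisibility of the polynomial is welcome elaboration, not a different method.
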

This is implied by the fact that the functor of counting rational
points factors through Grothendick ring of varieties.

Let us look closely at $F_\Gamma$ assumed being a polynomial. Since
$X$ is a hypersurface in $\AAA^{14}$, the degree of $F_X(q)$ is at
most 13. Recall that the hypersurface associated to a primitively
log divergent graph is always irreducible. This can be proved easily
by induction. As a consequence, the leading term of $F_X(q)$ is
$q^{13}$. By Proposition $\ref{divis1}$, we can rewrite $F_X(q)$
like
\begin{equation}\label{b9}
    F_X(q)=q^{13}+q^2\tilde{F}(q),
\end{equation}
where $\tilde{F}$ is polynomial of degree at most 10. Such
polynomial can be uniquely defined by its 11 values. So we need to
compute $\#X(F_q)$ for at least 12 prime powers. Of course, we take
the first prime powers starting with 2 and up to 19.

Our algorithm computes $\tilde{F}(19)$ in three days on the home
computer, but for this reason the formula (\ref{b7}) is not enough,
we need to stratify further using the shape of the matrix. It
becomes complicated, we do this in steps.\bigskip\\
\textbf{\emph{Step 3.}} We return to the formula \ref{b7}. The
polynomial $\tilde{G}_6$ is of degree 1 as a polynomial of $B_1$.
Write
\begin{equation}
    \tilde{G}_6=\tilde{G}^1_6B_1 + \tilde{G}^2_6.
\end{equation}
For
$\cV(I_5,I_6,\tilde{G}_6)=\cV(I_5,G_5,\tilde{G}_6)\subset\AAA^{12}$(A,B
no $A_0$,$B_0$), we separate into two cases according to whether
$\tilde{G}^1_6$ equals zero or not, and we get
\begin{multline}
    \quad[\cV(I_5,G_5,\tilde{G}_6)]=[\cV(I_5,G_5,\tilde{G}_6,\tilde{G}^1_6)]+\\
    [\cV(I_5,G_5,\tilde{G}_6)\backslash\cV(I_5,G_5,\tilde{G}_6,\tilde{G}^1_6)].\quad
\end{multline}
On the first variety on the right we forget $B_1$, while on the last
open scheme we can express $B_1$ from the equation $\tilde{G}_6=0$,
projecting down to $\AAA^{11}$. So, we obtain
\begin{multline}
    [\cV(I_5,G_5,\tilde{G}_6)]=\LL[\cV(I_5,G_5,\tilde{G}^1_6,\tilde{G}^2_6)]+
    [\cV(I_5,G_5)\backslash\cV(I_5,G_5,\tilde{G}^1_6)].
    \label{b11}
\end{multline}
By (\ref{b2}) and (\ref{b11}), one gets
\begin{multline}
    [X]=\LL^{13}-\LL^2[\cV(I_5,G_5)]+\LL^2[\cV(I_5,I_6,\tilde{G}_6)]
    =\LL^{13}-\LL^2[\cV(I_5,G_5)]\\
    +\LL^3[\cV(I_5,G_5,\tilde{G}^1_6,\tilde{G}^2_6)]
    +\LL^2([\cV(I_5,G_5)]
    -[\cV(I_5,G_5,\tilde{G}^1_6)])\\=\LL^{13}-\LL^2[\cV(I_5,G_5,\tilde{G}^1_6)]
    +\LL^3[\cV(I_5,G_5,\tilde{G}^1_6,\tilde{G}^2_6)]
    \label{b12}
\end{multline}
\textbf{\emph{Step 4.}} This step is more closely to the
implementation of the algorithm. The situation is the following. To
compute the number of rational points $\#X(\FF_q)$ we need to count
the number of solutions
$(a_0,\ldots,a_7,b_1,\ldots,b_7)\in\AAA^{14}$ of the equation
$I_7=0$. The brute force strategy is to put each of $q^{14}$
14-tuples into the equation and check if it is a solution. Then the
complexity is $O(14)$ times the complexity of one such check. If we
apply the Gauss algorithm for this, the total complexity will be
$O(14)O(3)=O(17)$. This is impossible to compute $\#X(\FF_q)$ with
this strategy for $q=19$ in 1 year with a home PC.

Formula (\ref{b12}) helps to restrict the complexity to
$O(11)O(3)=O(14)$ since we deal with 11-tuples $a_1,\ldots,b_7$ (no
$b_0$, $a_0$ or $b_1$) approximately. The complexity depends on the
branch of the algorithm where some polynomials vanish or not. The
last useful trick is the following. The polynomial $I_5=B_2I_4-G_4$
is zero on both varieties $Y=\cV(I_5,G_5,\tilde{G}^1_6)$ and
$Z=\cV(I_5,G_5,\tilde{G}^1_6,\tilde{G}^2_6)$ appearing in the
formula. We can decrease the number of tuples further. For each
10-tuple $a_1,\ldots,b_7$ (no $b_0$, $a_0$, $b_1$ or $b_2$) such
that $I_4\neq 0$ we have unique $b_2$ such that $I_5$ is zero. In
the opposite case, when $I_4=0$, we have no vanishing of $I_5$ if
$G_4\neq 0$, and the vanishing for all $b_2$ if $G_4=0$. Then on the
branch where $I_4\neq 0$ and $I_5=0$ we can compute $a_2$ in a way
to have $G_5=0$, so we do not need to run over all values of $a_2$.
Such tricks restrict the running time. For more explanation see the
program.

\section{Results}
The program gives us 13 points $F(2)$, $F(3)$,$\ldots$, $F(23)$. The
biggest one, $F(23)$ was computed in 2 weeks on my office computer
in Universit$\ddot{\textmd{a}}$t Duisburg-Essen on the Christmas
vacation 2008-2009. Then we use Lagrange interpolation formula for
first 11 of them to interpolate $\tilde{F}$ in (\ref{b9}). Then we
take $F(19)$ and check whether the point $(19,\tilde{F}(19))$ is
lying on that graph or not. It is not! After multiplying with
determinants the problem becomes integral and can be done even by
hand.

The interesting thing is that for the 10 odd prime powers q =3, 5,
7, 9, 11, 13, 17, 19, 23 the values $F(k)$ are exactly the values of
the following polynomial
\begin{multline}
F_1(q)=q^{13} +q^{11} +23q^{10} -78q^9 +90q^8 -35q^7 +(q-2)q^6\\
-34q^5 +66q^4 -32q^3 +(q-1)q^2
\end{multline}
But the values in the even prime povers 2, 4, 8, 16 are the values
of the polynomial $F_1-(q-1)q^2$. We believe that for $q$ prime to 2
the number of rational points $F_{X_{\Gamma}}(q)$ is $F_0(q)$, and
that for even prime powers
$F_{X_{\Gamma}}(2^k)=F_0(2^k)-(2^k-1)2^{2k}$. This will be not very
surprising since the first counterexample must be "not very bad",
and the simple picture for this is that we have a polynomial in all
but one prime. At this prime and all it's powers we have another
polynomial and the difference must be divisible by $q^2$ by Lemma
\ref{divis1}. The results are compatible with that of Schnetz,
\cite{Sch}, our graph is isomorphic to the graph on \emph{Fig 1.a}
without vertex 1. If one goes from the affine case to the projective
complement of graph hypersurface, he gets the polynomial
(2.30),\cite{Sch}.

\section{Example}

Here we study the well-known series $WS_n$. These are the simplest
examples of primitive log divergent graphs. The graphs $WS_n$,
$n\geq 3$ looks like $n$ points on the circle and one point inside,
the first $n$ edges connect the center to the points on the circle
and the other $n$ edges are the arcs that the circle is divided on.
To make the formulas a bit more readable later, we consider
$WS_{n+1}$.

In this section we work in projective setting and the graph
hypersurface $X$ lives in $\PP^{n+1}$. Now the$\cV(\cI)$ or
$\cV(f_1,\ldots,f_n)$ means the vanishing locus of the ideal
generated by homogenious polynomials
$f_1,\ldots,f_n\in\ZZ[T_1:\ldots:T_N]$ in projective space. If
$Y_p=\cV(f_1,\ldots,f_n)\subset \PP^{N-1}$ in this setting and
$Y_a=\cV(f_1,\ldots,f_n)\subset \AAA^{N}$ in the setting of Section
1, then the natural projection $\AAA^{N}\backslash
\{0\}\longrightarrow \PP^{N-1}$ induces an equality
$[Y_a]-1=(\LL-1)[Y_p]$. Thus $\#Y_a(\FF_q)=1+q\cdot\#Y_p$, and for
proving that $F_{\Gamma}$ is a polynomial it does not matter,
whether we work in the affine setting or in the projective one.

The graph polynomial of $WS_{n+1}$ is the determinant of the
following matrix.
\begin{equation}
 \cM_{n+1}=\cM_{WS_{n+1}}=
\left(
  \begin{array}{cccccccc}
    \s{B_0}& \s{A_0}& \so & \vdots & \so & \so &\s{\!A_{n}\!} \\
    \s{A_0}& \s{B_1}& \s{A_1}&\vdots& \so & \so  & \so \\
    \so    & \s{A_1}   & \s{B_2} &\vdots& \so &  \so & \so \\
    \ldots &\ldots &\ldots &\ddots &\ldots &\ldots &\ldots \\
    \so    & \so    & \so &\vdots& \s{\!B_{n-2}\!} &\s{\!A_{n-2}\!}  &\so \\
    \so    & \so    & \so &\vdots&\s{\!A_{n-2}\!}&\s{\!B_{n-1}\!} &\s{\!A_{n-1}\!}  \\
    \s{\!A_{n}\!} & \so&\so&\vdots&\so  &\s{\!A_{n-1}\!}&\s{\!B_{n}\!} \\
  \end{array}
\right).
\end{equation}

We again work the class of $X=\det{\cM_{n+1}}$ in the Grothendick
ring. Using the same technique as in the first section, we stratify
\begin{equation}
    [X]=\LL^2[\PP^{2n-2}\backslash\cV(I_{n})]+1+\LL[\cV(I_{n},G_{n})].
\end{equation}
Here $I_{n}$ is independent of $B_0,A_0$ and $A_{n}$; and
$G_{n}=-I_{n+1}|_{B_0=0}$. Since $I_{n+1}(0;n)=A_0A_1\ldots
A_{n-1}$, we can write
\begin{equation}
    G_{n}=A_0^2I_{n-1}-2A_0A_{n}\cdot A_0A_1\ldots
    A_{n-1}+A_{n}^2I_{n-1}^1
\end{equation}
Knowing $G_{n}$, we go on
\begin{multline}
    [X]=\LL^2([\PP^{2n-2}]-1)-\LL^3[\cV(I_{n-1},G_{n-1})]-\LL^3[\PP^{2n-4}\backslash\cV(I_{n-1})]+\\
    1+\LL+\LL^2[\cV(I_{n-1},I_{n},\tilde{G}_{n})]+\LL^3[\PP^{2n-4}\backslash\cV(I_{n-1})]=
    1+\LL+\\
    \LL^2([\PP^{2n-2}]-1)-\LL^3[\cV(I_{n-1},G_{n-1})]+\LL^2[\cV(I_{n-1},I_{n},\tilde{G}_{n})]=\\
    1+\LL+\LL^2([\PP^{2n-2}]-1)-\LL^3[Y']+\LL^2[Z'],
    \label{b16}
\end{multline}
where $G_{n-1}=-I_{n}|_{B_1=0}=A_1^2I_{n-2}$ and
$\tilde{G}_{n}=-I_{n+1}|_{B_0=0=A_0}=A_{n}^2I_{n-1}^1$.

First consider $Y'=\cV(I_{n-1},A_1I_{n-2})$. For each $i$, $1\leq i
\leq n-1$, we define
$Y'_i:=\cV(I_i,A_{n-i-1}I_{i-1})\subset\PP^{2i-1}$ in the projective
space with coordinates all $A's$ and $B's$ appearing in the the
submatrix of $I_i$ and $A_{n-i-1}$. Similarly $Y_i:=\cV(I_{i})$ is
defined in $\PP^{2i-2}$ with the same coordinates but no
$A_{n-i-1}$. In this notation $Y'=Y'_{n-1}$. Set $y_i:=[Y_i]$ and
$y'_i=[Y'_i]$ in $K_0(Var_K)$.

\begin{lemma}\label{recur1}
For $y'_i$ and $y_i$ as above, we have $y'_i,y_i\in \ZZ[\LL]$.
\end{lemma}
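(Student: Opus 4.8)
The plan is to establish the claim by simultaneous induction on $i$, producing explicit recursion relations that express $y'_i$ and $y_i$ in terms of $y'_{i-1}$, $y_{i-1}$, and powers of $\LL$. The base case $i=1$ is a direct computation: $Y_1 = \cV(I_1) = \cV(B_0)$ in a projective space of small dimension, and $Y'_1 = \cV(I_1, A_{n-2}I_0) = \cV(B_0, A_{n-2})$ (recalling $I_0=1$), so both classes are visibly in $\ZZ[\LL]$. Since the determinant $I_i$ is linear in the ``last'' diagonal variable $B_{i-1}$ (just as $I_7$ was linear in $B_0$ in Step~1), we can write $I_i = B_{i-1}I_{i-1} - G_{i-1}$ with $G_{i-1} = A_{i-1}^2 I_{i-2}$ by the same computation that gave $G_{n-1}=A_1^2 I_{n-2}$ in the excerpt.

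For the variety $Y_i = \cV(I_i)$, I would split according to whether $I_{i-1}$ vanishes or not, exactly as in the projection technique of Step~1: on $\cV(I_{i-1})$ the equation $I_i=0$ forces $G_{i-1}=A_{i-1}^2I_{i-2}=0$, so that locus contributes $[\cV(I_{i-1}, A_{i-1}^2 I_{i-2})]$, which up to the reduced structure is $Y'_{i-1}$ (since $\cV(A_{i-1}^2I_{i-2})=\cV(A_{i-1}I_{i-2})$ as sets, and $A_{i-1}$ plays the role of the extra variable $A_{n-i}$); on the complement $B_{i-1}$ is uniquely solvable, giving an $\AAA^1$-bundle over $\PP^{2i-4}\setminus \cV(I_{i-1})$ — but here one must be careful with projective bookkeeping, converting via $[Y_a]-1=(\LL-1)[Y_p]$ as the excerpt instructs. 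The upshot is a relation of the shape $y_i = (\text{polynomial in }\LL) + \LL\cdot y_{i-1}' - (\text{correction})$, all coefficients in $\ZZ[\LL]$ by induction. For $Y'_i = \cV(I_i, A_{n-i-1}I_{i-1})$ I would first intersect with the extra hypersurface: on $\cV(A_{n-i-1})$ we get $A_{n-i-1}=0$ together with $\cV(I_i)$ in one fewer variable, i.e.\ essentially $Y_i$ again; on the part where $A_{n-i-1}\neq 0$ but $I_{i-1}=0$ we land in $\cV(I_i,I_{i-1})=\cV(I_{i-1},A_{i-1}I_{i-2})$ which is $Y'_{i-1}$ with an extra free (punctured-line) coordinate for $A_{n-i-1}$. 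Combining gives $y'_i$ as a $\ZZ[\LL]$-combination of $y_i$, $y'_{i-1}$, $y_{i-1}$, and powers of $\LL$, closing the induction.

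The main obstacle I anticipate is not the algebra of the stratification but the careful handling of the ambient dimensions and the projective-versus-affine translations: each time we ``forget'' a variable or solve for one, the dimension of the projective space shifts, and the decomposition $[\PP^{N}\setminus\cV(f)]$ must be unwound consistently with the convention $\#Y_a = 1 + q\#Y_p$. One has to make sure that the reduced subschemes $\cV(A_{i-1}^2I_{i-2})$ and $\cV(A_{i-1}I_{i-2})$ genuinely have the same class (they do, since $K_0$ only sees the underlying reduced variety), and that the coordinate labelled $A_{n-i-1}$ in the definition of $Y'_i$ correctly matches the variable $A_{i-1}$ appearing in $G_{i-1}$ when one renames edges — this is purely a combinatorial bookkeeping point about which edge of $WS_{n+1}$ sits where in the submatrix. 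Once these are pinned down, both recursions have coefficients in $\ZZ[\LL]$, and since $\ZZ[\LL]$ is closed under the ring operations and under substituting $\LL\mapsto\LL$, the inductive step preserves membership; hence $y'_i, y_i \in \ZZ[\LL]$ for all $i$, and in particular $y' = y'_{n-1} \in \ZZ[\LL]$, which is what feeds back into formula (\ref{b16}) to finish the polynomiality of $WS_{n+1}$.
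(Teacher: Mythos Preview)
Your proposal is correct and follows essentially the same approach as the paper: induction on $i$, with the same two decompositions --- splitting $Y_i$ according to whether $I_{i-1}$ vanishes (using linearity of $I_i$ in the top diagonal variable) and splitting $Y'_i$ according to which factor of the product $A_{n-i-1}I_{i-1}$ vanishes --- yielding exactly the paper's recurrences $y_{k+1}=1+\LL y'_k+\LL[\PP^{2k-2}]-\LL y_k$ and $y'_{k+1}=\LL+y_{k+1}+\LL(\LL-1)y'_k$. Your flagged bookkeeping concern (that the off-diagonal variable appearing in $G_{i-1}$ is not literally $A_{n-i}$ but plays the same structural role) is precisely the point where the paper silently uses an isomorphism of ambient projective spaces, so you have identified the only nontrivial step.
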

\begin{proof} We prove the statement by induction on $i$. For $i=1$,
$Y_1=\cV(A_{n-2},B_{n-1})\subset\PP^1$ and
$Y'_1=B_{n-1}\subset\PP^0$, so $y'_1=y_1=0\in\ZZ[\LL]$. Assume now
that for $i=k<n-2$ the statement is true. Then for $i=k+1$:
\begin{multline}
    [Y'_{k+1}]=[\cV(I_{k+1},A_{n-k-2}I_{k})]= [\cV(I_{k+1},A_{n-k-2})]+[\cV(I_{k+1},I_k)^{(2k)}]-\\
    [\cV(I_{k+1},A_{n-k-2},I_k)]=[Y_{k+1}]+1+\LL[\cV(I_{k+1},I_k)^{(2k-1)}]-[\cV(I_{k+1},A_{n-k-2},I_k)]=\\
    [Y_{k+1}]+ 1+(\LL-1)[\cV(B_{n-k-1}I_{k}-A_{n-k-1}^2I_{k-1},I_k)]= \\
    [Y_{k+1}]+ 1+(\LL-1)(1+\LL[\cV(I_k,A_{n-k-1}I_{k-1})])
    =[Y_{k+1}]+\LL(\LL-1)[Y'_k]+\LL
\end{multline}
\begin{multline}
    [Y_{k+1}]=[\cV(B_{n-k-1}I_k-A_{n-k-1}^2I_{k-1})]=(1+\LL[\cV(I_k,A_{n-k-1}I_{k-1})])+\\
    \LL[\PP^{2k-2}\backslash \cV(I_k)]=
    1+\LL[Y'_k]+\LL[\PP^{2k-2}]-\LL[Y_k];
\end{multline}
We get recurrence formulas
\begin{equation}
\begin{aligned}
y_{k+1}=1+\LL y'_k+\LL[\PP^{2k-2}]-\LL y_k,\\
y'_{k+1}=\LL+y_{k+1}+\LL(\LL-1)y'_k
\end{aligned}
\end{equation}
with $y'_1=y_1=0$. If one likes, the substitution $v_k:=[Y_k]$ and
$u_k=[Y'_k]-[Y_k]$ brings us to a one recurrence formula for $u_k$,
which is easy to compute
\begin{equation}
\begin{aligned}
    v_{k+1}&=1+\LL[\PP^{2k-1}]+\LL u_k,\\
    u_{k+1}&=\LL+\LL(\LL-1)(u_k+1+\LL u_{k-1}+\LL[\PP^{2k-3}]).
\end{aligned}
\end{equation}
Now elements $y_i$ and $y'_i$ are now evidently polynomials of
$\LL$.
The first terms are listed in the table below. \medskip\\
\mathstrut\quad\begin{tabular}{c|c|c|c|c|}
     i &  \;1\; & 2 & 3& 4 \\ \hline
    $y_i$&\;0\; & $\LL+1$ & $\LL^3+2\LL^2+\LL+1$ & $\LL^5+3\LL^4+\LL^2+\LL+1$ \\
    $y'_i$ &\;0\; & $2\LL+1$      & $3\LL^3+\LL^2+\LL+1$ & $4\LL^5+\LL^4+\LL^2+\LL+1$\\
\end{tabular}
\end{proof}
Now we return to the last summand in $(\ref{b16})$:
\begin{multline}
[Z']=[\cV(I_{n-1},I_{n},\tilde{G}_{n})]=[\cV(I_{n-1},I_{n},A_{n}^2I_{n-1}^1)]=
[\cV(I_{n-1},I_{n},A_{n})]+\\ [\cV(I_{n-1},I_{n},I_{n-1}^1)]-
[\cV(I_{n-1},I_{n},A_{n},I_{n-1}^1)]=[\cV(I_{n-1},I_{n},A_{n})]+\\
1+(\LL-1)[\cV(I_{n-1},I_{n},A_{n},I_{n-1}^1)]=2+\LL
y_{n-1}'+(\LL-1)[Z]. \label{b22}
\end{multline}
We use the same trick as appeared in \cite{BEK}. Define
$Z_1:=\cV(I_{n},I^1_{n-1})$ and $Z_2:=\cV(I_{n},I_{n-1})$. Then
$Z=Z_1\cap Z_2$. Recall that
\begin{equation}
    I^1_{n-1}I_{n-1}-S_{n-1}^2=I_{n}I^1_{n-2},
\end{equation}
where $S_{n-1}:=I_{n}(0;n-1)$, the determinant of the matrix when we
throw away 0-th column and (n-1)-th row. Define
\begin{equation}
    T:=Z_1\cup Z_2 = \cV(I_{n},S_{n-1})=\cV(I_{n},\prod_{i=1}^{n-1} A_i).
\end{equation}
Note that $Z_1\cong Z_2$ is a cone over $Y'_i$ in the setting of
Lemma \ref{recur1} and
\begin{equation}
    [Z]=2(1+\LL y'_{n-1})-[T].
    \label{b25}
\end{equation}
Define $T_I=T\cup\bigcap_{i\in I}\cV(A_i)$ for each
$I\subset\{1,\ldots,n-1\}$. Then
\begin{equation}\label{b125}
    [T]=\sum_i [T_i] - \sum_{i,j} [T_{i,j}]+ \sum_{i,j,k}
    [T_{i,j,k}]-\ldots + (-1)^{n-1} [T_{1,2,\ldots,n-1}].
\end{equation}
For $1\leq i_1<i_2<\ldots<i_p\leq n-2$,
\begin{equation}
    T_{i_1,\ldots,i_p}=\cV(I^{n-i_1}_{i_1}I^{n-i_2}_{i_2-i_1}\cdot\ldots\cdot
    I^{n-i_{p}}_{i_p-i_{p-1}}I_{n-i_{p}}).
    \label{b127}
\end{equation}
One can decompose $T_{i_1,\ldots,i_p}:=T'$ similar to (\ref{b125})
\begin{multline}
    T_{i_1,\ldots,i_p}=T'\cap\cV(I^{n-i_1}_{i_1})+
    T'\cap\cV(I^{n-i_2}_{i_2-i_1}) +\ldots+
    T'\cap\cV(I_{n-i_{p}})-\\ -
    T'\cap\cV(I^{n-i_1}_{i_1},I^{n-i_2}_{i_2-i_1})-\ldots+
    T'\cap\cV(I^{n-i_1}_{i_1},I^{n-i_2}_{i_2-i_1},I^{n-i_3}_{i_3-i_2})+\ldots
    \label{b128}
\end{multline}
Note that each of the first $p+1$ summands in the last sum is a cone
over some $I_j^k$, that is a determinant of a 3-diagonal matrix of
the right dimension similar to $Y_j$ in Lemma \ref{recur1}. Next,
$T'\cap\cV(I^{k}_{j},I^{k'}_{j'})$ is a cone over $Y_j\times
Y_{j'}$. And so on. Substituting all sums like (\ref{b127}) into
(\ref{b125}), we get an expression for $[T]$ as a big sum in terms
of $y_i$.


One likes to get a convenient recursive formula to compute the
number of points with a computer quickly. The good way to do this is
the following. Define $T^j=\cV(I_j,S_{j-1})$ for $2\leq j\leq n$, so
that $T^n=T$. Let $S_{(i)}$ be the sum consisting of the summands of
a big sum where $i_1=i$ in (\ref{b127}). First,
$[\cV(B_1I_{n-1})]\in S_{(1)}$. We write
\begin{multline}
    [\cV(B_1I_{n-1})]=(1+\LL[\cV(I_{n-1})])+
    ([\PP^{2n-4}]+\LL^{2n-3}[\cV(B_1)])-\\
    [\cV(B_1,I_{n-1})]=
    1+(\LL-1)y_{n-1}+([\PP^{2n-4}]+\LL^{2n-3}y_1).
\end{multline}
Next, consider a summand with exactly one $A_i$ vanishing (except
$A_1$). We get
\begin{multline}
    [\cV(B_1I^{n-i}_{i-1}I_{n-i})]=1+(\LL-1)[\cV(I^{n-i}_{i-1}I_{n-i})]+
    [\PP^{2n-5}]+\LL^{2n-4}y_1.
    \label{b130}
\end{multline}
In general, for
$P=I^{n-i_1}_{i_1-1}I^{n-i_2}_{i_2-i_1}\cdot\ldots\cdot
I^{n-i_{p}}_{i_p-i_{p-1}}I_{n-i_{p}}$ we compute
\begin{equation}
    [\cV(B_1P)]=1+(\LL-1)[\cV(P)]+[\PP^{2n-4-p}]+\LL^{2n-3-p}y_1.
\end{equation}
For a fixed $p$ we have exactly $\binom{n-2}{p}$ different summands
$P$. Taking a sum over all such $P$ for all $p$ with right signs, we
come to the sum
\begin{multline}
    S_{(1)}=\sum_{p=0}^{n-2}(-1)^p\binom{n-2}{p}+(\LL-1)y_{n-1}-(\LL-1)[T^{n-1}]+\\
    \sum_{p=0}^{n-2}(-1)^p\binom{n-2}{p}[\PP^{2n-4-p}]+
    y_1\sum_{p=0}^{n-2}(-1)^p\binom{n-2}{p}\LL^{2n-3-p}=\\
    (\LL-1)(y_{n-1}-[T^{n-1}])+\sum_{p=0}^{n-2}(-1)^p\binom{n-2}{p}[\PP^{2n-4-p}]+\\
    \LL^{n-1}(\LL-1)^{n-2}y_1.
    \label{b132}
\end{multline}
Consider now a summand $[\cV(I_{i}P)]$ of $S_{(i)}$ with
$P=I^{n-i_1}_{i_1-i}I^{n-i_2}_{i_2-i_1}\cdot\ldots\cdot
I_{n-i_{p}}$, $1<i<n-1$. Similar to (\ref{b130}), one obtains
\begin{multline}
    [\cV(I^{n-i}_iP)]=([\PP^{2i-2}]+\LL^{2i-1}[\cV(P)])+([\PP^{2n-2i-p-2}]+\\
    \LL^{2n-2i-p-1}[\cV(I^{n-i}_i)])- [\cV(I^{n-i}_i,P)]=([\PP^{2i-2}]-y_i)+\\
    (\LL^{2i-1}-y_i(\LL-1)-1)[\cV(P)]+[\PP^{2n-2i-p-2}]+\LL^{2n-2i-p-1}y_i.
    \label{b133}
\end{multline}
We used here that for two polynomials $P$ and $Q$ of different
non-intersected sets of variables (both of cardinality at least 2),
one has $[\cV(P,Q)]=(\LL-1)[\cV(P)][\cV(Q)]+[\cV(P)]+[\cV(Q)]$ in
the corresponding projective spaces. Taking the sum over all $p$ and
$P$ with the right signs and grouping like in (\ref{b132}), we
obtain
\begin{multline}
    S_{(i)}=(\LL^{2i-1}-y_i(\LL-1)-1)(y_{n-i}-[T^{n-i}])+\\
    \sum_{p=0}^{n-1-i}(-1)^p\binom{n-1-i}{p}[\PP^{2n-2i-2-p}]+
    \LL^{n-i}(\LL-1)^{n-1-i}y_i.
\end{multline}
for $i<n-1$ and
\begin{equation}
    S_{(n-1)}=\LL^{2n-3}y_1+1+(\LL-1)y_{n-1}+[\PP^{2n-4}].
\end{equation}

The big sum for $[T]=[T^n]$ is a sum of all $S_{(i)}$. Considering
$n$ to be non-fixed from the beginning, we get a recurrence formula
\begin{multline}
    [T^n]=\sum_{i=1}^{n-1} S_{(i)}=[\PP^{2n-4}]+\sum_{i=1}^{n-2}(\LL^{2i-1}-
    y_i(\LL-1)-1)\\(y_{n-i}-
    [T^{n-i}])+
    \sum_{i=1}^{n-2}\sum_{p=0}^{n-1-i}(-1)^p\binom{n-1-i}{p}[\PP^{2n-2i-2-p}]+\\
    \sum_{i=1}^{n-2}\LL^{n-i}(\LL-1)^{n-1-i}y_i+1+(\LL-1)y_{n-1}
\end{multline}
with $[T^2]=2$. We can simplify the sum of projective spaces
\begin{multline}
    \sum_{p=0}^{n-1-i}(-1)^p\binom{n-1-i}{p}[\PP^{2n-2i-2-p}]=
    [\PP^{2n-2i-2}]+\sum_{p=1}^{n-2-i}(-1)^p(\binom{n-2-i}{p}+\\
    \binom{n-2-i}{p-1})[\PP^{2n-2i-2-p}]+(-1)^{n-1-i}=
    \sum_{p=0}^{n-2-i}(-1)^p \binom{n-2-i}{p}([\PP^{2n-2i-2-p}-\\
    \PP^{2n-2i-3-p}]=\sum_{p=0}^{n-2-i}(-1)^p\binom{n-2-i}{p}\LL^{2n-2i-2-p}=
    \LL^{n-i}(\LL-1)^{n-i-2}
\end{multline}
for $1\leq i\leq n-2$. The formula for [T] simplifies to
\begin{multline}
    [T^n]=[\PP^{2n-4}]+\sum_{i=1}^{n-2}(\LL^{2i-1}-y_i(\LL-1)-1)
    (y_{n-i}-[T^{n-i}])+\\
    \sum_{i=1}^{n-2}\LL^{n-i}(\LL-1)^{n-2-i}(1+(\LL-1)y_i)+1+(\LL-1)y_{n-1}.
\end{multline}
First terms are
\begin{equation}
\begin{aligned}\label{b140}
[T^2]&=&2&,\\
[T^3]&=&4&\LL^2-\LL+2,\\
[T^4]&=&6&\LL^4 - 4\LL^3 + 5\LL^2 - 2\LL + 2,\\
[T^5]&=&8&\LL^6 - 8\LL^5 + 9\LL^4 - 7\LL^3 + 8\LL^2 - 3\LL + 2.
\end{aligned}
\end{equation}
By (\ref{b16}), (\ref{b22}) and (\ref{b25}), we obtain
\begin{multline}
    [X]=1+\LL+\LL^2[\PP^{2n-2}]-\LL^2-
    \LL^3y'_{n-1}+\\\LL^2(2+\LL y'_{n-1}+
    (\LL-1)(2(1+\LL y'_{n-1})-[T^{n}]))=\\
    1+\LL+\LL^2[\PP^{2n-2}]+\LL^2+2\LL^2(\LL-1)(1+\LL
    y'_{n-1})-\LL^2(\LL-1)[T^{n}].
\end{multline}
For example,
\begin{equation}
\begin{aligned}\label{b140}
[X_3]&=&\LL^4&+\LL^3+2\LL^2+\LL+1,\\
[X_4]&=&\LL^6&+\LL^5+4\LL^4-2\LL^3+2\LL^2+\LL+1, \\
[X_5]&=&\LL^8&+\LL^7+7\LL^6-8\LL^5+8\LL^4-3\LL^3+2\LL^2+\LL+1.
\end{aligned}
\end{equation}
for $X_k:=X_{WS_k}$.

\end{document}